\DeclareMathOperator{\link}{link}
\def\NZQ{\mathbb}               % the font for N,Z,Q,R,C
\def\ZZ{{\NZQ Z}}
\def\Ac{{\mathcal A}}
\def\opn#1#2{\def#1{\operatorname{#2}}} % to make operators
\opn\chara{char} \opn\length{\ell} \opn\pd{pd} \opn\rk{rk}
\opn\projdim{proj-dim}
\opn\injdim{inj\,dim} \opn\rank{rank}
\opn\depth{depth} \opn\grade{grade} \opn\height{height}
\opn\embdim{emb\,dim} \opn\codim{codim}
\opn\Cl{Cl}
\opn\Tr{Tr} \opn\bigrank{big\,rank}
\opn\superheight{superheight}\opn\lcm{lcm}
\opn\trdeg{tr\,deg}%\emph{
\opn\rdeg{rdeg}
	\opn\reg{reg} \opn\lreg{lreg} \opn\ini{in} \opn\lpd{lpd}
	\opn\size{size} \opn\sdepth{sdepth}
	\opn\link{link}\opn\fdepth{fdepth}\opn\lex{lex}
	\opn\tr{tr}
	\opn\type{type}
	\opn\gap{gap}
	\opn\arithdeg{arith-deg}
	\opn\revlex{revlex}
	\opn\div{div} \opn\Div{Div} \opn\cl{cl} \opn\Cl{Cl}
	\opn\Spec{Spec} \opn\Supp{Supp} \opn\supp{supp} \opn\Sing{Sing}
	\opn\Ass{Ass} \opn\Min{Min}\opn\Mon{Mon}
	\opn\Ann{Ann} \opn\Rad{Rad} \opn\Soc{Soc}
	\opn\Im{Im} \opn\Ker{Ker} \opn\Coker{Coker} \opn\Am{Am}
	\opn\Hom{Hom} \opn\Tor{Tor} \opn\Ext{Ext} \opn\End{End}
	\opn\Aut{Aut} \opn\id{id}
	\opn\nat{nat}
	\opn\pff{pf}%   \pf exists already
	\opn\Pf{Pf} \opn\GL{GL} \opn\SL{SL} \opn\mod{mod} \opn\ord{ord}
	\opn\Gin{Gin} \opn\Hilb{Hilb}\opn\sort{sort}
	\opn\PF{PF}\opn\Ap{Ap}
	\opn\mult{mult}
	\opn\bight{bight}
	\opn\div{div}
	\opn\Div{Div}
	\opn\aff{aff}
	\opn\relint{relint} \opn\st{st}
	\opn\lk{lk} \opn\cn{cn} \opn\core{core} \opn\vol{vol}  \opn\inp{inp} \opn\nilpot{nilpot}
	\opn\link{link} \opn\star{star}\opn\lex{lex}\opn\set{set}
	\opn\width{wd}
	\opn\Fr{F}
	\opn\QF{QF}
	\opn\G{G}
	\opn\type{type}\opn\res{res}
	\opn\conv{conv}
	\opn\Int{Int}
	\opn\Deg{Deg}
	\opn\Sym{Sym}
	\opn\Con{Con}
	\opn\gr{gr}
	\def\pot#1#2{#1[\kern-0.28ex[#2]\kern-0.28ex]}
	\opn\dirlim{\underrightarrow{\lim}}
	\opn\inivlim{\underleftarrow{\lim}}
	\def\Implies{\ifmmode\Longrightarrow \else
		\unskip${}\Longrightarrow{}$\ignorespaces\fi}
	\def\implies{\ifmmode\Rightarrow \else
		\unskip${}\Rightarrow{}$\ignorespaces\fi}
	\def\iff{\ifmmode\Longleftrightarrow \else
		\unskip${}\Longleftrightarrow{}$\ignorespaces\fi}
	\newtheorem{Theorem}{Theorem}[section]
	\newtheorem{TheoremA}{Theorem}
	\newtheorem{Lemma}[Theorem]{Lemma}
	\newtheorem{Proposition}[Theorem]{Proposition}
	\theoremstyle{definition}
	\newtheorem{Remark}[Theorem]{Remark}
	\let\epsilon\varepsilon
	\let\kappa=\varkappa
	\opn\dis{dis}
	\def\pnt{{\raise0.5mm\hbox{\large\bf.}}}
	\opn\Lex{Lex}
\begin{document}
\title[A special class of pure $O$-sequences]{A special class of pure $O$-sequences}
\author[T\`ai Huy H\`a]{T\`ai Huy H\`a}
\address{Mathematics Department, Tulane University, 6823 St. Charles Avenue, New Orleans, LA 70118, USA}
\email{tha@tulane.edu}
\author[Takayuki Hibi]{Takayuki Hibi}
\address{Department of Pure and Applied Mathematics, Graduate School of Information Science and Technology, Osaka University, Suita, Osaka 565-0871, Japan}
\email{hibi@math.sci.osaka-u.ac.jp}
\author[Fabrizio Zanello]{Fabrizio Zanello}
\address{Department of Mathematical Sciences, Michigan Tech, Houghton, MI 49931, USA}
\email{zanello@mtu.edu}
\dedicatory{}
\keywords{Pure $O$-sequence; $h$-vector; level algebra; order ideal of monomials}
\subjclass[2020]{Primary: 05E40; Secondary: 13D40, 13H10}
     % 05A15 Exact enumeration problems, generating functions
     % 05A19 Combinatorial identities, bijective combinatorics
%\thanks{This research broke out while the second and the third authors were visiting the Mathematics Department of Tulane University in the spring semester of 2024.}
\begin{abstract}
The pure $O$-sequences of the form $(1,a,a,\ldots)$ are classified.
\end{abstract}	
\maketitle

\thispagestyle{empty}
\section*{Introduction}
Let $x_1, \ldots, x_s$ represent distinct indeterminates with $\deg x_i = 1$, for $i = 1, \ldots, s$. A nonempty, finite set $\Ac$ of monomials in $x_1, \ldots, x_s$ is called an {\em order ideal of monomials} if for any $u \in \Ac$ and any monomial $v$ that divides $u$, we have $v \in \Ac$.  In particular, $1 \in \Ac$ for any order ideal of monomials $\Ac$.  We say that $\Ac$ is {\em pure} if the maximal elements of $\Ac$, with respect to divisibility, all have the same degree.
The {\em $h$-vector} of $\Ac$ is defined as $h(\Ac) = (h_0,h_1,\ldots,h_n),$ where
$$n = \max \{\deg u : u \in \Ac\} \text{{\ }and{\ }} h_i = \big|\{u \in \Ac : \deg u = i \}\big|, \text{ for } 0 \leq i \leq n.$$
Clearly, $h_0 = 1$.

A finite sequence of positive integers $h = (h_0,h_1,\ldots,h_n)$ is called an {\em $O$-sequence} if there exists an order ideal of monomials $\Ac$ with $h = h(\Ac)$.  Finally, following Stanley \cite{pure}, an $O$-sequence $h$ is {\em pure} if there exists a pure order ideal of monomials $\Ac$ with $h = h(\Ac)$. Equivalently, in the language of commutative algebra, pure $O$-sequences coincide with the Hilbert functions of (standard graded) artinian monomial level algebras. We refer to \cite{BMMNZ,MNZ} for an introduction to the theory of pure $O$-sequences both combinatorially and algebraically, and for some recent developments.

A classification of the possible $O$-sequences is essentially due to Macaulay (see \cite{Ma} and \cite[Theorem 2.2]{Stanley}).  On the other hand, an explicit characterization of pure $O$-sequences seems entirely out of reach, despite much effort by many researchers. A notably long-standing problem in this field is a conjecture of Stanley's, stating that the $h$-vector of any matroid complex is a pure $O$-sequence \cite{pure, S1996}. Partial results in this direction have been obtained in, for instance, \cite{cran, dall, DKK, HSZ, HLO, Hibi, klee, kook, merino, sch}.

The purpose of the present paper is to prove the following:

\begin{TheoremA}
\label{New_Orleans}
Let $n \geq 4$. Then a sequence $h=(1,a,a,\ldots,a,b) \in \ZZ_{>0}^{n+1}$ is a pure $O$-sequence if and only if $b \leq a \leq 2b$.
\end{TheoremA}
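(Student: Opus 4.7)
\textbf{Sufficiency.} For $b \le a \le 2b$, I would set $k = a - b \in \{0, 1, \ldots, b\}$ and let $\mathcal{A}$ be the pure order ideal generated by the $b$ pairwise variable-disjoint monomials
\[
x_1^{n-1} y_1,\ \ldots,\ x_k^{n-1} y_k,\quad z_1^n,\ \ldots,\ z_{b-k}^n.
\]
Because the generators involve disjoint variables, the $h$-vector is the ``disjoint sum'' of the individual contributions. Each monomial $x_i^{n-1} y_i$ contributes $(1, 2, 2, \ldots, 2, 1)$ and each pure power $z_j^n$ contributes $(1, 1, \ldots, 1, 1)$, and routine bookkeeping yields $h(\mathcal{A}) = (1, k+b, \ldots, k+b, b) = (1, a, \ldots, a, b)$.

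\textbf{Setup for necessity.} Given a pure order ideal $\mathcal{A}$ with $h(\mathcal{A}) = (1, a, \ldots, a, b)$ and $n \ge 4$, I would introduce the \emph{support graph} $G = G(\mathcal{A})$: vertices are the $a$ variables actually appearing in $\mathcal{A}$, a loop is placed at $v$ iff $x_v^2 \in \mathcal{A}$, and an edge $\{v, w\}$ is present iff $x_v x_w \in \mathcal{A}$ for $v \neq w$. From $h_1 = h_2 = a$ the graph has $a$ vertices and $a$ edges (loops counted once); by purity each variable extends to a maximal monomial of degree $n \ge 2$, which has a degree-$2$ divisor involving that variable, so every vertex has degree $\ge 1$. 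The crux of the proof is the structural lemma: \emph{every connected component of $G$ is a ``loop-star,''} a central vertex $u$ carrying a loop together with $k \ge 0$ leaves each adjacent only to $u$.

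\textbf{Proving the structural lemma.} The identity $\sum_C (\text{cycle rank of }C) = e - v + \#\{\text{components}\}$, combined with $e = v = a$, forces the total cycle rank to equal the number of components. I would then establish the sub-claim: \emph{if a vertex $v$ has no loop, then $v$ is adjacent to some loop-vertex or else $v$ lies in a squarefree maximal monomial of degree $n$}. (Writing such a maximal monomial as $M = x_v M'$ with $\deg M' = n - 1 \ge 3$, if some variable of $M'$ has exponent $\ge 2$ it carries a loop and is adjacent to $v$; otherwise $M$ is squarefree.) A squarefree maximal monomial of degree $n$ would induce a clique of size $n \ge 4$ in $G$, which is impossible in a tree (no triangles) or in a unicyclic component whose cycle is a $C_\ell$ with $\ell \ge 3$ and no loops (maximum clique is $\le 3$, attained only for $\ell = 3$, and $3 < n$). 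Hence every component has cycle rank exactly $1$ with the cycle being a loop, and the sub-claim then forces every other vertex of the component to be adjacent to the unique loop-vertex.

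\textbf{Counting and main obstacle.} Each loop-star with $k$ leaves contributes exactly $k + 1$ to each $h_i$ for $1 \le i \le n - 1$ (namely $x_u^i$ and $x_u^{i-1}x_{w_1}, \ldots, x_u^{i-1}x_{w_k}$), and its maximal monomials are $x_u^{n-1} x_{w_1}, \ldots, x_u^{n-1} x_{w_k}$ (each mandatory, since each leaf must lie in some maximal monomial) plus $x_u^n$ exactly when $x_u^n \in \mathcal{A}$. Summing over components, $a - b$ equals the number of components omitting $x_u^n$; each such component has $k \ge 1$ and so contributes at least one maximal monomial to $b$, so this count is at most $b$, yielding $a \le 2b$, while non-negativity gives $b \le a$. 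The delicate step of the whole argument is the structural lemma, and especially ruling out triangular cycles: the hypothesis $n \ge 4$ is precisely what prevents squarefree maximal monomials from fitting into such components, and indeed for $n = 3$ the conclusion of the theorem fails, since $x_1 x_2 x_3$ alone gives $h = (1, 3, 3, 1)$.
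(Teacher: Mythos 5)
Your proposal is correct, and while your sufficiency construction is essentially identical to the paper's Lemma \ref{bbbbb} (disjoint generators of the two shapes $x^{n-1}y$ and $z^n$), your necessity argument takes a genuinely different route. The paper splits necessity into two independent pieces: $b\leq a$ is deduced algebraically in Lemma \ref{aaaaa} from the Hibi--Hausel $g$-theorem (injectivity of multiplication by a general linear form through the first half, upgraded to bijectivity and then propagated as surjectivity to the top degree, over a field of characteristic zero), while $a\leq 2b$ is Lemma \ref{ccccc}, a short double count comparing, for each generator $u_j$, the number $p_j$ of newly introduced variables with the number $q_j$ of newly introduced quadratic divisors, and observing that $q_1>p_1$ once some generator carries at least three variables and $n\geq 4$. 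You instead prove both inequalities at once by classifying the degree-$\leq 2$ layer of $\Ac$ as a graph with $a$ vertices and $a$ edges-plus-loops, showing every component is a loop-star; your use of $n\geq 4$ (a squarefree top-degree monomial would force a $K_n$, impossible in a unicyclic component) plays the same role as the paper's inequality $q_1>p_1$, and the case $(1,3,3,1)$ correctly explains why the bound fails at $n=3$. What your approach buys: it is entirely elementary and characteristic-free, avoiding the Lefschetz machinery of Lemma \ref{aaaaa}, and it delivers as a byproduct exactly the structural statement of the paper's Remarks 1.4--1.5, namely that every such pure order ideal is assembled from the blocks $(1,2,2,\ldots,2,1)$ and $(1,1,\ldots,1,1)$. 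What the paper's route buys: Lemma \ref{aaaaa} already works for $n\geq 3$ and yields the Strong Lefschetz Property observation as a bonus, and Lemma \ref{ccccc}'s count is considerably shorter than a full structure theorem. Two small points worth making explicit if you write this up: the identity $h_i=\sum_C(k_C+1)$ for $2\le i\le n-1$ needs the observation that the support of any monomial of $\Ac$ is a clique of $G$, hence lies in a single component (this is also what confines each maximal monomial to one loop-star); and in the tree-component exclusion you should note that a loop-vertex adjacent to $v$ would itself lie in $v$'s component, so neither alternative of your sub-claim can occur there.
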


The proof of Theorem \ref{New_Orleans} is given in Section $1$.  In addition, in Section $2$, as a supplement to Theorem \ref{New_Orleans}, the pure $O$-sequences $(1,a,b)$ and $(1,a,a,b)$ are classified (see Proposition \ref{1ab1aab}).

Part of our motivation for considering pure $O$-sequences of the form $(1,a,a,\ldots,a,b)$ arises from the theory of $\delta$-vectors of Castelnuovo polytopes \cite{LBT, kawaguchi, SHEH}, where it was shown that a sequence $(1,a,a,\ldots,a,b) \in \ZZ_{>0}^{n+1}$, with $n \geq 2$, is the $h$-vector of a Cohen--Macaulay graded domain if $b \leq a \leq (b+1)(n+1)$.  In particular, Theorem \ref{New_Orleans} together with Proposition \ref{1ab1aab} (ii) guarantees that, when $n \geq 3$, any pure $O$-sequence $(1,a,a,\ldots,a,b)$ is the $h$-vector of a Cohen--Macaulay graded domain.

Finally, a problem of current interest in commutative algebra is to determine classes of artinian algebras that enjoy the so-called {\em Weak} (or {\em Strong}) {\em Lefschetz Properties} \cite{BMMNZ,har}. The results of this paper imply that, over a field of characteristic zero, {\em any} artinian monomial level algebra with Hilbert function given by a pure $O$-sequence of the form $(1,a,a,\ldots,a,b)$ has the Strong Lefschetz Property.

\section{Proof of Theorem \ref{New_Orleans}}
Our proof of Theorem \ref{New_Orleans} is divided into several lemmata.

\begin{Lemma}\label{ba}
\label{aaaaa}
Let $h=(1,a,a,\ldots,a,b) \in \ZZ_{>0}^{n+1}$ be a pure $O$-sequence, where $n \geq 3$. Then $b\leq a$.
\end{Lemma}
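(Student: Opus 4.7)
The plan is to reduce the statement to an application of the Clements--Lindstr\"om theorem on minimum shadows of monomial sets. First, I would observe that $h_1 = a$ means exactly $a$ variables appear in elements of $\Ac$, so one can assume the monomials of $\Ac$ involve only $x_1,\ldots,x_a$ (the case $a=1$ being trivial). Introducing the down-shadow $\partial M_n := \{u/x_k : u \in M_n,\ x_k \mid u\}$, I would first argue that purity forces $\partial M_n = M_{n-1}$: the inclusion $\subseteq$ is tautological, and the reverse follows because any $w \in M_{n-1}$ that fails to divide some element of $M_n$ would be a maximal element of degree $n-1 < n$, violating purity. This pins down $|\partial M_n| = a$.

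The next step is to invoke the Clements--Lindstr\"om theorem: among all sets of $b$ monomials of degree $n$ in $a$ variables, the lex-segment $L_b$ (the first $b$ monomials in lex order with $x_1 > \cdots > x_a$) minimizes the size of the shadow. In particular $|\partial M_n| \geq |\partial L_b|$, and since $|\partial L_c|$ is clearly non-decreasing in $c$, the desired contradiction reduces to the single inequality $|\partial L_{a+1}| > a$.

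Finally, I would verify this last inequality by direct computation: the first $a$ lex monomials are $x_1^n$ together with $x_1^{n-1}x_j$ for $j = 2, \ldots, a$, with combined shadow equal to the set $\{x_1^{n-1}\} \cup \{x_1^{n-2}x_j : 2 \leq j \leq a\}$ of size $a$; the $(a+1)$-st lex monomial $x_1^{n-2}x_2^2$ then contributes the two degree-$(n-1)$ divisors $x_1^{n-2}x_2$ (already present) and $x_1^{n-3}x_2^2$, so that $|\partial L_{a+1}| = a+1$, completing the argument.

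I expect the one delicate point---and effectively the only obstacle---to be verifying that $x_1^{n-3}x_2^2$ is genuinely a new element of the shadow. This requires $n \geq 3$ both for $x_1^{n-3}x_2^2$ to be a well-defined monomial of degree $n-1$ and for its $x_1$-exponent $n-3$ to distinguish it from the elements of $\partial L_a$ (whose $x_1$-exponents are $n-1$ or $n-2$). This is precisely where the hypothesis $n \geq 3$ is used, and the argument correctly breaks at $n=2$, in agreement with the fact that $(1,2,3)$ is a pure $O$-sequence.
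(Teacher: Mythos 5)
Your reduction to shadows is set up correctly: purity does force $\partial M_n = M_{n-1}$, hence $|\partial M_n| = a$, and the monotonicity of the minimum shadow size in $b$ is fine. The fatal step is the extremal claim: the lex-first segment does \emph{not} minimize the size of the \emph{lower} shadow. The Macaulay/Clements--Lindstr\"om minimizers for lower shadows sit at the opposite end of the order --- they concentrate the monomials on as few variables as possible, whereas the lex-first segment $x_1^n, x_1^{n-1}x_2,\dots$ spreads across all variables and has a comparatively large shadow. Concretely, for $a=3$, $n=3$, $b=4$, your $L_4=\{x_1^3,x_1^2x_2,x_1^2x_3,x_1x_2^2\}$ has shadow of size $4$, while $\{x_1^3,x_1^2x_2,x_1x_2^2,x_2^3\}$ has shadow $\{x_1^2,x_1x_2,x_2^2\}$ of size $3$. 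So the inequality $|\partial M_n|\ge|\partial L_b|$ is false and the chain of estimates collapses.

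The gap also cannot be repaired by substituting the correct extremal family, because the true minimum of $|\partial S|$ over sets $S$ of $a+1$ monomials of degree $n$ in at most $a$ variables is in general $\le a$. For instance, the $n+1$ monomials of degree $n$ in two variables have shadow of size exactly $n$, so for $a=n$ one obtains $a+1$ monomials with shadow of size $a$; when $a$ is large relative to $n$ the minimum shadow is far smaller than $a$ (roughly all monomials of degree $n-1$ in the $j$ variables needed to accommodate $a+1$ monomials of degree $n$). These minimizing configurations do not contradict the lemma only because they fail to involve all $a$ variables, i.e., they violate $h_1=a$. Your argument uses $h_1=a$ merely to fix the ambient polynomial ring, which is not enough: any shadow-theoretic proof must exploit in an essential way that every one of the $a$ variables divides some $u_j$ --- this is exactly what the count of ``new variables'' versus ``new quadratic divisors'' accomplishes in the paper's proof of Lemma~\ref{ccccc}. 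For comparison, the paper proves the present lemma by an entirely different, algebraic route: the Hibi--Hausel differentiability theorem gives injectivity of multiplication by a general linear form through the first half, equality of dimensions upgrades this to bijectivity, and surjectivity then propagates to the top degree, yielding $b\le a$.
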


\begin{proof}
This result can easily be shown using \cite[Proposition 3.6]{BMMNZ}, but we present a self-contained proof since it appears of independent interest. Consider an artinian monomial level $k$-algebra $A=\bigoplus_{i=0}^nA_i$ with Hilbert function $h$, where $k=A_0$ is a field of characteristic zero. By Hibi-Hausel's $g$-theorem on the differentiability of a pure $O$-sequence through its first half (see \cite[Theorem 1.1]{Hibi} and \cite[Theorem 6.2]{Hausel}), we deduce that multiplication by a Zariski-general linear form $L$ between consecutive graded pieces $A_i$ and $A_{i+1}$ is injective, for all indices $i\leq \lfloor n/2 \rfloor$.

Now note that, because $A_i$ and $A_{i+1}$ have the same $k$-vector space dimension in those degrees (namely, $a$), multiplication by $L$ is in fact bijective. Finally, since the grading of $A$ is standard, it is easy to see (\cite[Proposition 2.1]{MMN}) that if multiplication by $L$ is surjective from some degree $i$ to $i+1$, then it is surjective from degree $j$ to $j+1$, for all  $j\geq i$. The case $j=n-1$ yields $b\leq a$.
\end{proof}

\begin{Lemma}
\label{bbbbb}
Let $n\geq 2$. Then $(1,a,a,\ldots,a,b) \in \ZZ_{>0}^{n+1}$ is a pure $O$-sequence for any $b=\lceil a/2\rceil,\ldots, a$.
\end{Lemma}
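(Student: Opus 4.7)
The plan is to construct, for each admissible pair $(a,b)$, an explicit pure order ideal of monomials whose $h$-vector is $(1,a,a,\ldots,a,b) \in \ZZ_{>0}^{n+1}$. Write $a = b+c$ where $0 \leq c \leq b$; this is possible precisely because $\lceil a/2\rceil \leq b \leq a$.

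I introduce two families of variables, $x_1,\ldots,x_b$ and $y_1,\ldots,y_c$, and define $\Ac$ to be the order ideal generated (under divisibility) by the following $b$ monomials of degree $n$:
\[
\mathcal{M} \;=\; \{\, y_i\, x_i^{n-1} : 1 \leq i \leq c \,\}\;\cup\;\{\, x_i^n : c+1 \leq i \leq b \,\}.
\]
By construction $\Ac$ is pure, as every element of $\mathcal{M}$ has degree $n$ and every other monomial in $\Ac$ is a proper divisor of one of them.

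The key step is then a direct count of $h_d = h_d(\Ac)$ for each $d$. For $1 \leq d \leq n-1$, the divisors of $y_i x_i^{n-1}$ of degree $d$ are precisely $x_i^d$ and $y_i x_i^{d-1}$, contributing $2$ monomials for each $i \in \{1,\ldots,c\}$, while the divisors of $x_i^n$ of degree $d$ are just $x_i^d$, contributing $1$ monomial for each $i \in \{c+1,\ldots,b\}$. All these monomials are pairwise distinct because the $x_i$ and $y_i$ are distinct indeterminates, so
\[
h_d \;=\; 2c + (b-c) \;=\; b+c \;=\; a.
\]
In degree $n$ only the generators themselves appear, giving $h_n = c + (b-c) = b$, and of course $h_0=1$. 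Thus $h(\Ac) = (1,a,a,\ldots,a,b)$, as required.

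There is no real obstacle beyond verifying that the counts $2c+(b-c)=a$ and $c+(b-c)=b$ match at every degree, which is immediate from $c=a-b$, and checking that the inequality $c \leq b$ needed for the construction to make sense is equivalent to the hypothesis $b \geq \lceil a/2\rceil$. The extremal cases $b=a$ (take $c=0$, recovering $\{x_1^n,\ldots,x_a^n\}$) and $b=\lceil a/2\rceil$ (take $c=\lfloor a/2\rfloor$) are both handled uniformly.
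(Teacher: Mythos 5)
Your proposal is correct and is essentially the paper's own argument: both take a disjoint union of $c$ (resp.\ $s_1$) generators of the form $y\,x^{n-1}$, each contributing the pure $O$-sequence $(1,2,2,\ldots,2,1)$, together with $b-c$ (resp.\ $s_2$) generators of the form $x^n$, each contributing $(1,1,\ldots,1,1)$, and then add the $h$-vectors. The only difference is notational (your $c=a-b$ versus the paper's partition $a=2s_1+s_2$ with $s_1+s_2=b$), and your degree-by-degree divisor count is a correct, slightly more explicit verification of the same computation.
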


\begin{proof}
Partition $a$ into exactly $b$ parts of size $\leq 2$, say $a=2s_1+s_2$ with $s_1+s_2=b$, and consider the two sets of monomials $x_1x_2^{n-1}, \ldots,x_{2s_1-1}x_{2s_1}^{n-1}$ and $y_1^n,\ldots, y_{s_2}^n$. Then the pure $O$-sequence they generate is
\begin{eqnarray*}
&&(1,0,0,\ldots,0)+s_1\cdot (0,2,2,\ldots,2,1)+s_2\cdot (0,1,1,\ldots,1,1)\\
&=&(1,2s_1+s_2,2s_1+s_2,\ldots,2s_1+s_2,s_1+s_2)\\
&=&(1,a,a,\ldots,a,b),
\end{eqnarray*}
as desired.
\end{proof}

\begin{Lemma}\label{ccccc}
Let $n\geq 4$. Then $(1,a,a,\ldots,a,b) \in \ZZ_{>0}^{n+1}$ cannot be a pure $O$-sequence if $2b<a$.
\end{Lemma}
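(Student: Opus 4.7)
The plan is to assume for contradiction that the sequence $(1,a,a,\ldots,a,b)$ with $2b<a$ and $n\ge 4$ is a pure $O$-sequence, realized by an order ideal $\Ac$ with maximal monomials $u_1,\ldots,u_b$ of degree $n$, and derive a contradiction by pushing the Lefschetz analysis of Lemma~\ref{aaaaa} all the way down to degree~$2$. Since $h_1=h_2=a$ and the assumption $n\ge 4$ places degree $2$ still inside the first half of the $h$-vector, Hausel's $g$-theorem gives that multiplication by a Zariski-generic linear form $L\in A_1$ is injective, hence bijective (dimensions match), from $A_1$ onto $A_2$. Dualizing via Macaulay's inverse system, the contraction map $L\diamond:M_2\to M_1$ is then a bijection, where $M=\langle u_1,\ldots,u_b\rangle\subset k[y_1,\ldots,y_a]$.

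Next I would translate this bijectivity into a combinatorial constraint. In the natural monomial bases, $L\diamond$ sends $y_iy_l\mapsto \lambda_iy_l+\lambda_ly_i$ and $y_i^2\mapsto 2\lambda_iy_i$, so its $a\times a$ matrix has $(i,m)$-entry nonzero iff $y_i\mid m$. A Leibniz-expansion argument converts generic invertibility into the existence of a perfect matching in the associated bipartite graph of variables versus degree-$2$ monomials of $\Ac$, equivalently (K\"onig--Hall) the following condition on the \emph{loopy graph} $H$ on vertex set $[a]$, where $\{i,l\}$ is an edge iff $x_ix_l\in\Ac$ and $i$ carries a loop iff $x_i^2\in\Ac$: every subgraph $H'\subseteq H$ satisfies $|E(H')|+|L(H')|\le|V(H')|$. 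Combined with $h_2=a=|V(H)|$ globally, a short Euler-characteristic count then upgrades this to the statement that \emph{every connected component of $H$ contains exactly one cycle}, where a loop is counted as a length-one cycle.

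The hard part, and where the hypothesis $n\ge4$ is genuinely used, is to convert this structural constraint into a rigid classification of the generators. Each $u_j$ contributes to $H$ the clique $K_{\supp u_j}$ together with loops at the variables whose exponent in $u_j$ is $\ge 2$. For $n\ge 4$, a support of size $\ge 3$ in some $u_j$ would contribute both a triangle (from the clique) and a loop (since $n>|\supp u_j|$ forces some exponent $\ge 2$) inside the same component, giving two cycles; a support of size $2$ with both exponents $\ge 2$ would similarly produce two loops. Hence each $u_j$ must have the shape $x_i^n$ or $x_i^{n-1}x_l$. Moreover, no edge-cycle of length $\ge 3$ can occur in $H$, since each constituent size-$2$ generator carries a loop while no vertex lies on every edge of such a cycle. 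Consequently each component of $H$ is a `star with loop at center': a star $K_{1,v_C-1}$ at some vertex $v_0$ with a loop at $v_0$, produced by $v_C-1$ size-$2$ generators $x_{v_0}^{n-1}x_l$ (one per leaf) and perhaps one extra size-$1$ generator $x_{v_0}^n$. Writing $\tau$ for the number of components, this yields $b\ge\sum_C(v_C-1)=a-\tau$, so $\tau\ge a-b$, while $\tau\le b$ since each component contains at least one generator; combining gives $a-b\le\tau\le b$, i.e., $a\le 2b$, contradicting $2b<a$.
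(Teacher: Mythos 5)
Your argument is correct, but it takes a genuinely different and considerably heavier route than the paper's. The paper's proof of Lemma~\ref{ccccc} is a short double count with no algebra at all: letting $p_j$ (resp.\ $q_j$) be the number of variables (resp.\ quadratic monomials) dividing $u_j$ but none of $u_1,\ldots,u_{j-1}$, one has $\sum_j p_j=h_1=a$ and $\sum_j q_j=h_2=a$; since $2b<a$ some generator introduces at least three new variables and may be placed first, and then $q_j\ge p_j$ for all $j$ while $q_1>p_1$ (this is where $n\ge 4$ enters, forcing an extra square divisor when $|\supp u_1|=3$), an immediate contradiction. Your route instead pushes the generic-injectivity statement of Lemma~\ref{aaaaa} down to degree $2$, dualizes to the inverse system, extracts a perfect matching between the $a$ variables and the $a$ quadratic monomials from the Leibniz expansion, and then classifies the resulting unicyclic components of the loopy graph $H$. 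The steps check out: the Hall inequality on edge subsets together with the global equality $|E|+|L|=|V|=a$ does force each component to carry exactly one cycle; ruling out supports of size $\ge 3$ and doubly-divisible supports of size $2$ is where $n\ge 4$ is used; and the final count $a-b\le\tau\le b$ gives $a\le 2b$. One small patch is needed where you exclude supports of size $\ge 3$: if $u_j$ is squarefree with $|\supp u_j|=n\ge 4$ there is no loop, but the clique $K_{\ge 4}$ already has cyclomatic number $\ge 2$, so the conclusion is unaffected. What your approach buys is the structural by-product that every generator is $x_i^n$ or $x_ix_l^{n-1}$ and that the quadratic monomials organize into loop-rooted stars --- essentially the content the paper derives separately in Remark~\ref{rmk.h} --- at the cost of invoking Hausel's theorem and a characteristic-zero field (harmless, since one may take $k=\QQ$), whereas the paper's count is elementary and self-contained.
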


\begin{proof}
Let $x_1, \ldots, x_a$ be variables.  Suppose that $h=(1,a,a,h_3,\ldots,h_{n-1},b) \in \ZZ_{>0}^{n+1}$ is a pure $O$-sequence, where $2b<a$ and $n \geq4$.  Let $\{u_1, \ldots, u_b\}$ denote a set of monomials in $x_1, \ldots, x_a$ of degree $n$ that generates $h$.  Let $p_j$ be the number of variables $x_i$ for which $x_i$ divides $u_j$, but does not divide any of $u_1, \ldots, u_{j-1}$.  Since $2b < a$, we can assume that $p_1 \geq 3$.  Let $q_j$ be the number of quadratic monomials $x_ix_{i'}$ for which $x_ix_{i'}$ divides $u_j$, but does not divide any of $u_1, \ldots, u_{j-1}$.  Then
\begin{equation}\label{pq}
\sum_{j=1}^{b} p_j = \sum_{j=1}^{b} q_j = a.
\end{equation}

Note that $q_j \geq p_j$, for each $j$.  Furthermore, since $n \geq 4$, it follows that $q_1 > p_1$. This contradicts (\ref{pq}), completing the proof.
\end{proof}

Combining Lemmata \ref{aaaaa}, \ref{bbbbb}, and \ref{ccccc} proves Theorem \ref{New_Orleans}.

\begin{Remark} \label{rmk.h}
Let $n\geq 4$, and $h = (1,h_1,\ldots,h_n) \in \ZZ_{>0}^{n+1}$ be a pure $O$-sequence such that $h_1 = h_i$ for some $2 \leq i \leq n-2$. It follows from \cite[Theorem 1.1]{Hibi} that $h_1 = h_2$.

Let $h_1 = h_2 = a$ and $h_n = b$.  Arguing as in the proof of Lemma \ref{ccccc}, we have $a \leq 2b$ and each $q_j \in \{1,2\}$.  Let $q_j = 2$ for $1 \leq j \leq b'$, and $q_j = 1$ for $b'+1 \leq j \leq b$.  If $1 \leq j \leq b'$, then $u_j = x_{j_1} x_{j_2}^{n-1}$ with $j_1 \neq j_2$.  If $b'+1 \leq j \leq b$, then either $u_j = x_{j_1}^n$ or $u_j = x_{j_1} x_{j'_2}^{n-1}$, where $1 \leq j'_2 \leq b'$.  Thus, each $h_i = 2b' + (b - b') = a$.  Hence $h=(1,a,a,\ldots,a,b)$.
\end{Remark}

\begin{Remark}
The proof of Lemma \ref{ccccc}, together with Remark \ref{rmk.h}, shows that all pure $O$-sequences of the form $(1,a,a,\ldots,a,b)$ can be constructed starting from the two pure $O$-sequences $(1,2,2,\ldots,2,1)$ and $(1,1,\ldots,1,1)$.
\end{Remark}

\begin{Remark}
Interestingly from an algebraic standpoint,  it follows from the argument of Lemma \ref{ba} that, over a field of characteristic zero, any artinian monomial level algebra with Hilbert function given by the pure $O$-sequence $(1,a,a,\ldots,a,b)$ enjoys the Strong Lefschetz Property.
\end{Remark}

\section{The pure $O$-sequences $(1,a,b)$ and $(1,a,a,b)$}
As a supplement to Theorem \ref{New_Orleans}, we give the following result, which completes the characterization of pure $O$-sequences of the form $(1,a,\ldots,a,b)$.

\begin{Proposition}
\label{1ab1aab}
Let $a$ and $b$ be positive integers.
\begin{itemize}
\item[(i)] The sequence $(1,a,b)$ is a pure $O$-sequence if and only if $\lceil a/2\rceil \leq b\leq \binom{a+1}{2}$.
\item[(ii)] The sequence $(1,a,a,b)$ is a pure $O$-sequence if and only if $\lceil a/3\rceil \leq b\leq a$.
\end{itemize}
\end{Proposition}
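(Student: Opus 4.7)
The plan is to prove (i) and (ii) independently, following the template of Lemmata \ref{bbbbb} and \ref{ccccc}: necessity via a variable count on the set of maximal monomials, and sufficiency via an explicit construction using disjoint fresh variables across generators.

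For (i), let $\Ac$ be a pure order ideal with $h(\Ac)=(1,a,b)$ and maximal monomials $u_1,\ldots,u_b$ of degree $2$. Each $u_j$ involves at most two distinct variables, and their union must comprise all $a$ variables appearing in $\Ac$ (else $h_1<a$), whence $a\le 2b$. The complementary bound $b\le\binom{a+1}{2}$ holds simply because there are only $\binom{a+1}{2}$ degree-$2$ monomials in $a$ variables. For sufficiency, given $\lceil a/2\rceil\le b\le\binom{a+1}{2}$, the plan is to start with the ``matching'' set $x_1x_2,x_3x_4,\ldots$ (appending $x_a^2$ if $a$ is odd) on variables $x_1,\ldots,x_a$, which uses all $a$ variables and yields $\lceil a/2\rceil$ generators, and then to adjoin $b-\lceil a/2\rceil$ arbitrary further degree-$2$ monomials in $x_1,\ldots,x_a$.

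For (ii), the bound $b\le a$ is exactly Lemma \ref{aaaaa} applied with $n=3$, while the lower bound $b\ge\lceil a/3\rceil$ follows from the analogous variable count: each degree-$3$ maximal monomial uses at most three distinct variables, so $3b\ge a$. For sufficiency, given $\lceil a/3\rceil\le b\le a$, the plan is to choose nonnegative integers $s_1,s_2,s_3$ satisfying
\[
s_1+s_2+s_3=b \qquad\text{and}\qquad 3s_1+2s_2+s_3=a.
\]
Such a solution exists precisely in this range, by case analysis on whether $a\le 2b$ (set $s_1=0$, $s_2=a-b$, $s_3=2b-a$) or $a>2b$ (set $s_1=a-2b$, $s_2=3b-a$, $s_3=0$). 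Then, using pairwise disjoint fresh variables across generators, one takes $s_1$ generators of shape $xyz$, $s_2$ of shape $x^2y$, and $s_3$ of shape $x^3$. A direct computation shows that each generator contributes equally to $h_1$ and to $h_2$ (amounts $3$, $2$, $1$ respectively) and $1$ to $h_3$, so summing over all $b$ generators yields the $h$-vector $(1,a,a,b)$.

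The main subtlety lies in the sufficiency of (ii): one must verify both that the above linear system admits a nonnegative integer solution exactly in the stated range, and that the disjoint-variable construction avoids any collision among the degree-$2$ divisors contributed by different generators (so that $h_2=a$, not merely $h_2\le a$). Both points are elementary once set up, but require some care. The remaining portions of the proposition are essentially counting exercises modelled directly on Lemmata \ref{bbbbb} and \ref{ccccc}.
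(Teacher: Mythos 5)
Your proposal is correct and takes essentially the same approach as the paper: necessity by counting variables covered by the maximal monomials (plus Lemma \ref{aaaaa} for $b\leq a$), and sufficiency by an explicit disjoint-variable construction from generators of shapes $xyz$, $xy^2$, $x^3$ (the paper's $t_1,t_2,t_3$ are your $s_1,s_2,s_3$). The only difference is that the paper leaves part (i) as a cited exercise and phrases the existence of $(s_1,s_2,s_3)$ as a partition statement rather than your explicit case analysis; both are routine.
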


\begin{proof}
(i) The proof is a simple exercise.  See \cite[Corollary 4.7]{BMMNZ} and \cite[Example 1.2]{Hibi}.

(ii) We want to determine when $b$ monomials of degree $3$ in $a$ variables have a total of $a$ degree 2 divisors. First note that each of these $b$ monomials can involve at most 3 variables. This implies $\lceil a/3\rceil \leq b$. The upper bound $b\leq a$ was proven in Lemma \ref{aaaaa}. Thus, it remains to show that $(1,a,a,b)$ is a pure $O$-sequence for each integer $b=\lceil a/3\rceil,\ldots, a$.

Any degree $3$ monomial is of one of the following three kinds: $xyz$ (which generates the pure $O$-sequence $(1,3,3,1)$); $xy^2$ (generating $(1,2,2,1)$); and $x^3$ (generating $(1,1,1,1)$). Further, any integer $a$ in the range under consideration can be partitioned into {\em exactly} $b$ parts of size $\leq 3$ (since, clearly, any integer $a-b$ satisfying $0\leq a-b\leq 2b$ can be partitioned into {\em at most} $b$ parts of size $\leq 2$). Hence write $a=3t_1+2t_2+t_3$, where the multiplicities $t_i$ are nonnegative and sum up to $b$.

Now consider $t_1$ squarefree monomials of degree 3 in disjoint sets of variables, say
$$x_1x_2x_3, {\ }\ldots,{\ } x_{3t_1-2}x_{3t_1-1}x_{3t_1};$$
$t_2$ monomials of the form
$$y_1y_2^2, {\ }\ldots,{\ } y_{2t_2-1}y_{2t_2}^2;$$
and  $t_3$ monomials of the form
$$z_1^3, {\ }\ldots,{\ } z_{t_3}^3.$$
The pure $O$-sequence generated by the above $t_1+t_2+t_3$ monomials is given by:
\begin{eqnarray*}
&&(1,0,0,0)+t_1\cdot (0,3,3,1)+t_2\cdot (0,2,2,1)+t_3\cdot (0,1,1,1)\\
&=&(1,3t_1+2t_2+t_3,3t_1+2t_2+t_3,t_1+t_2+t_3)\\
&=&(1,a,a,b).
\end{eqnarray*}
This concludes the proof of (ii).
\end{proof}

\begin{Remark}
We wrap up by noting that while any pure $O$-sequence is an artinian level Hilbert function \cite{BMMNZ}, the converse is far from being true, even for $n=2$.

It is easy to see that the sequence $(1,a,b)$ is level if and only if $1\leq b\leq \binom{a+1}{2}$. Also, $(1,a,a,b)$ is level for any $b$ in the range $1\leq b\leq a$.  More generally, using the techniques of \cite{Ia1}, it can be shown that for any $n\geq 3$, $(1,a,a,\ldots,a,b) \in \ZZ_{>0}^{n+1}$ is level whenever $1\leq b\leq a$, over a field of any characteristic.

However, larger values of $b$ may also be attained when $n\geq 3$. For instance, $(1,13,13,14)$ is a level Hilbert function (\cite[Chapter 3]{BMMNZ}). In fact, we remark here without proof that, in stark contrast to the case of pure $O$-sequences, it is possible to construct level sequences $(1,a,a,b)$ where the difference $b-a$ gets arbitrarily large and is asymptotic to $a$ itself. This strongly suggests that an explicit characterization of level Hilbert functions might hard to achieve, even in the special case $(1,a,a,b)$.
\end{Remark}

\section*{Acknowledgements} The main ideas contained in this paper were discussed during a visit to New Orleans by the second and third authors in spring 2024. They are sincerely grateful to the Tulane Math Department for its support and hospitality. The first and third authors were partially supported by a Simons Foundation grant (\#850912 and \#630401, respectively).


\begin{thebibliography}{99}

\bibitem{BMMNZ} M.~Boij, J.~Migliore, R.~Mir\`o-Roig, U.~Nagel, and F.~Zanello, ``On the shape of a pure $O$-sequence,'' Mem. Amer. Math. Soc. \textbf{218} (2012), no. 1024, vii + 78 pp..

\bibitem{cran} P.~Cranford, A.~Dochtermann, E.~Haithcock, J.~Marsh, S.~Oh, and A.~Truman, Biconed graphs, weighted forests, and $h$-vectors of matroid complexes, {\em Electron. J. Combin.} \textbf{28} (2021), no. 4, Paper 4.31, 23 pp..

\bibitem{dall} A.~Dall, Internally perfect matroids, {\em Electron. J. Combin.} \textbf{24} (2017), no. 2, Paper 2.35, 31 pp..

\bibitem{DKK} J.~De Loera, Y.~Kemper, and S.~Klee, $h$-vectors of small matroid complexes, {\em Electron. J. Combin.} \textbf{19} (2012), no. 1, Paper 14, 11 pp..

\bibitem{HSZ} H.T.~H\`a, E.~Stokes, and F.~Zanello, Pure $O$-sequences and matroid $h$-vectors, {\em Ann. Comb.} \textbf{17} (2013), 495--508.

\bibitem{har} T.~Harima, T.~Maeno, H.~Morita, Y.~Numata, A.~Wachi, and J.~Watanabe, ``The Lefschetz Properties,'' Lecture Notes in Math. \textbf{2080}, Springer, Heidelberg (2013).

\bibitem{Hausel} T.~Hausel, Quaternionic geometry of matroids, {\em Cent. Eur. J. Math.} \textbf{3} (2005), 26--38.

\bibitem{HLO} A.~He, P.~Lai, and S.~Oh, The $h$-vector of a positroid is a pure $O$-sequence, {\em European J. Combin.} \textbf{110} (2023), Paper No. 103684, 10 pp..

\bibitem{Hibi} T.~Hibi, What can be said about pure $O$-sequences?, {\em J. Combin. Theory Ser. A} \textbf{50} (1989), 319--322.

%\bibitem{flawless} T.~Hibi, Flawless $O$-sequences and Cohen--Macaulay integral domains, {\em J. Pure Appl. Algebra} \textbf{60} (1989), 245--251.

\bibitem{LBT} T.~Hibi, A lower bound theorem for Ehrhart polynomials of convex polytopes, {\em Adv. Math.} \textbf{105} (1994), 162--165.

\bibitem{Ia1} A.~Iarrobino, Compressed Algebras: Artin algebras having given socle degrees and maximal length, {\em Trans. Amer. Math. Soc.} \textbf{285} (1984), 337--378.

\bibitem{kawaguchi} R.~Kawaguchi, Sectional genus and the volume of a lattice polytope, {\em J. Algebraic Combin.} \textbf{53} (2021), 1253--1264.

\bibitem{klee} S.~Klee and J.A.~Samper, Lexicographic shellability, matroids, and pure order ideals, {\em Adv. in Appl. Math.} \textbf{67} (2015), 1--19.

\bibitem{kook} W.~Kook, The $h$-vector of coned graphs, {\em Appl. Math. Lett.} \textbf{24} (2011), no. 4, 528--532.

\bibitem{Ma} F.H.S.~Macaulay, Some property of enumeration in the theory of modular systems, {\em Proc. Lond. Math. Soc.} \textbf{26} (1) (1927), 531--555.

\bibitem{merino} C.~Merino, The chip firing game and matroid complexes, {\em Discrete Math. Theor. Comput. Sci. Proc., AA}, Maison de l'Informatique et des Math. Discr. (MIMD), Paris (2001), 245--255.

\bibitem{MMN} J.~Migliore, R.~Mir\`o-Roig, and U.~Nagel, Monomial ideals, almost complete intersections and the weak Lefschetz property, {\em Trans. Amer. Math. Soc.} \textbf{363} (2011), 229--257.

\bibitem{MNZ} J.~Migliore,  U.~Nagel, and F.~Zanello, Pure $O$-sequences: known results, applications, and open problems, in ``Commutative Algebra.  Expository Papers Dedicated to David Eisenbud on the Occasion of His 65th Birthday'' (I. Peeva, Ed.), Springer, New York (2013), 527--550.

\bibitem{SHEH} I.~Sainose, G.~Hamano, T.~Emura, and T.~Hibi, The minimal volume of a lattice polytope, {\em Australas. J. Combin.} \textbf{85} (2023), 159--163.

\bibitem{sch} J.~Schweig, On the $h$-vector of a lattice path matroid, {\em Electron. J. Combin.} \textbf{17} (2010), no. 1, Note 3, 6 pp..

\bibitem{pure} R.P.~Stanley, Cohen--Macaulay complexes, in ``Higher Combinatorics'' (M. Aigner, Ed.), NATO Advanced Study Institute Series, Reidel, Dordrecht/Boston (1977), 51--62.

\bibitem{Stanley} R.P.~Stanley, Hilbert functions of graded algebras, {\em Adv. Math.} \textbf{28} (1978), 57--83.

\bibitem{S1996} R.P.~Stanley, ``Combinatorics and Commutative Algebra,'' Birkh\"auser Boston, Inc., Boston, MA (1996).

\end{thebibliography}
\end{document}